\NeedsTeXFormat{LaTeX2e}

\documentclass{amsart}
\usepackage{amssymb}
\usepackage{color}

\newtheorem{theorem}{Theorem}[section]
\newtheorem{lemma}[theorem]{Lemma}
\newtheorem{corollary}[theorem]{Corollary}
\newtheorem{proposition}[theorem]{Proposition}

\theoremstyle{remark}
\newtheorem{example}{Example}
\newtheorem{remark}{Remark}

\title{Precompact
noncompact reflexive abelian groups}

\author{S.~Ardanza-Trevijano, M.\,J.~Chasco, X.~Dom\'{\i}nguez,
and M.\,G.~Tkachenko}



\begin{document}
\maketitle

\begin{abstract}
We present a series of examples of precompact, noncompact, reflexive
topological Abelian groups. Some of them are pseudocompact or even
countably compact, but we show that there exist precompact
non-pseudo\-compact reflexive groups as well. It is also proved that
every pseudocompact Abelian group is a quotient of a reflexive
pseudocompact group with respect to a closed 
reflexive pseudocompact subgroup.

2010 Mathematics Subject Classification: 43A40, 22A05, 54H11 (primary), 54D30, 54A35 (secondary).
\end{abstract}

\section{Introduction}
\footnotetext{The authors were supported by  CONACyT of Mexico, grant
000000000074468; MEC of Spain BFM2006-03036, and FEDER funds.}
The Pontryagin--van Kampen duality theorem states that if $G$ is a
LCA (locally compact Abelian) group then the canonical evaluation
mapping $\alpha_G: G \to G^{\wedge\wedge}$ defined by
$\alpha_G(x)(\chi)=\chi(x)$ for all $x\in G$ and $\chi\in G^\wedge$
is a topological isomorphism. Here $G^\wedge$ denotes the group of
continuous characters $\chi: G\rightarrow{\mathbb T}$, where
$\mathbb{T}=\{z\in\mathbb{C}: |z|=1\}$ is the circle group in the
complex plane $\mathbb{C}$, and the groups $G^\wedge$ and $G^{\wedge
\wedge}$ carry, as usual, the compact-open topology $\tau_{co}$.

There have been many efforts to extend the Pontryagin--van Kampen
duality theorem outside the realm of locally compact Abelian groups.
The groups $G$ such that $\alpha_G$ is a topological isomorphism are
usually called \emph{reflexive}. It turns out that many non-locally
compact topological Abelian groups are reflexive. The first examples
were provided by Kaplan in \cite{Kaplan}, where he proved that any
product of reflexive groups is reflexive. The main motivation of
this paper is to answer the following question posed by M.~J.~Chasco
and E.~Mart\'{\i}n Peinador in \cite[p.~641]{ChM}: Is a precompact
reflexive Abelian group necessarily compact? We
recall that a precompact group is just a subgroup of a compact
group. Note that, since the dual of a metrizable precompact group is
discrete (see \cite[4.10]{Au} or \cite{Ch}), reflexive, precompact,
noncompact groups can only be found within non-metrizable groups.

 A topological space is said to be
\textit{pseudocompact\/} if every real-valued continuous function
defined on it is bounded. According to \cite[Theorem~1.1]{CR2},
pseudocompact groups form a proper subclass of the class of
precompact groups; actually they can be characterized as the
$G_{\delta}$-dense subgroups of compact groups. Being pseudocompact
yields nice relations between a topological Abelian group and its
dual. For example, Hern\'{a}ndez and Macario established in
\cite[Proposition~3.4]{HM} that a precompact Abelian group $G$ is
pseudocompact iff for every countable subgroup $H$ of $G^\wedge$,
the largest precompact topological group topology of $H$ coincides
with the topology $\omega(H,G)$ of pointwise convergence on elements
of $G$.

We show in Theorem~\ref{fincompreflex} that every pseudocompact
Abelian group without infinite compact subsets is reflexive. To
establish the existence of infinite pseudocompact Abelian groups
without infinite compact subsets, we use the notion of
\textit{$h$-embedded\/} subgroup of a topological group introduced
in \cite{Tk1}. Let $\mathcal{P}_h$ be the class of topological
Abelian groups $G$ which are pseudocompact and have the property
that every countable subgroup $C$ of $G$ is $h$-embedded, i.~e.,
every (not necessarily continuous) homomorphism from $C$ to the
circle group ${\mathbb T}$ admits an extension to a
\textit{continuous\/} homomorphism from $G$ to ${\mathbb T}$.
According to \cite{Tk1}, the class $\mathcal{P}_h$ contains many
infinite topological groups. We show in Proposition~\ref{Pro:1} that
all compact subsets of an arbitrary group $G\in\mathcal{P}_h$ are
finite.

Slightly refining our techniques, we present in Theorem~\ref{Th:4}
an example of a reflexive, precompact, non-pseudocompact group.
Under Martin's Axiom, a countably compact noncompact reflexive
group is presented in Proposition~\ref{Exa:HJ}. We show in
Example~\ref{negative2}, however, that $\omega$-bounded groups need
not be reflexive.

In Section~\ref{Sec:ClP} we give some insight into the wideness and
permanence properties of the class of reflexive pseudocompact
groups. We prove in Theorem~\ref{Th:QG} that \textit{every\/}
pseudocompact Abelian group is a quotient of a reflexive group
$H\in\mathcal{P}_h$ with respect to a closed pseudocompact subgroup
$L$. In particular, the group $L$ is reflexive. We
also answer a question in \cite{HM} by showing that there exists an
infinite reflexive pseudocompact group $G$ such that the dual group
$G^\wedge$ is also pseudocompact and, hence, fails to be a
$\mu$-space.

\subsection*{Notation and preliminary facts}
We only consider Abelian groups. The \emph{polar\/} of a subset $A$
of a topological group $G$ is the set
\[
A^{\vartriangleright}=\{\chi\in G^{\wedge}\,\colon\, \chi(A)\subset
\mathbb{T}_+\},
\]
where $\mathbb{T}_+=\{z\in\mathbb{T}:{\rm Re} z\ge 0\}$. The
\emph{inverse} polar of a set $B\subset G^{\wedge}$ is the set
\[B^{\vartriangleleft}=\{x\in G\,\colon\, \chi(x)\in \mathbb{T}_+\, \
\mbox{for\ each}\ \chi\in B\}.\]

Polars can be used to describe both the compact-open topology
$\tau_{co}$ and the topology of pointwise convergence, which we
denote by $\tau_p$. Polars of compact sets form a neighborhood basis
at the identity of $(G^\wedge, \tau_{co})$, while polars of finite
sets play the same role in $(G^\wedge, \tau_p)$. For a topological
Abelian group $G$, a set $E\subset G^\wedge$ is
\emph{equicontinuous\/} if there exists a neighborhood $U$ of the
neutral element in $G$ such that $E\subset U^\triangleright$.

The evaluation mapping $\alpha_G$ is continuous if and only if the
compact subsets of $(G^\wedge,\tau_{co})$ are equicontinuous
\cite[Proposition~5.10]{Au}. We will see below that this result has
strong implications for precompact groups.

For an Abelian group $A$, we denote by $Hom(A,\mathbb{T})$ the group
of all homomorphisms $h\colon A\to\mathbb{T}$ with the pointwise
multiplication. Given a subgroup $L$ of the group
$Hom(A,\mathbb{T})$, we will denote by $\omega(A,L)$ the topology on
$A$ induced by the elements of $L$. Comfort and Ross showed in their
seminal paper \cite{CR1} that the topology of a precompact Abelian
group $G$ always coincides with $\omega(G,G^{\wedge}).$ In
particular $\omega(A,Hom(A,{\mathbb T}))$ is the maximal precompact
topological group topology on a given abstract Abelian group $A$.
This topology is also known as the {\it Bohr topology} on $A$.

A topological group $G$ is \textit{sequentially complete\/} if it is
\textit{sequentially closed\/} in any topological group $H$ that
contains $G$ as a topological subgroup. In other words, no sequence
of elements of $G$ converges to an element of $H\setminus G$. It is easy
to see that $G$ is sequentially complete iff $G$ is sequentially
closed in its completion $\varrho{G}$.

Precompact groups are subgroups of compact groups, which in turn
satisfy the Pontryagin-van Kampen duality theorem. Hence it is
easy to deduce that for each precompact group $G$, $\alpha_G$ is
injective and open when considered as a mapping onto its image.
However, for a precompact group $G$, $\alpha_G$ can fail to be
continuous or surjective, as we see in the following examples.

\begin{example}\label{negative1}
Let $H$ be any locally compact, noncompact Abelian group. Denote by
$G$ the group $H$ endowed with the precompact topology
$\omega(H,H^\wedge)$. It is clear that $\omega(H,H^\wedge)$ is
strictly weaker than the original topology of $H$.

We claim that $\alpha_G$ is surjective but not continuous. Indeed,
$G$ has the same continuous characters as $H$, and it is a
consequence of the classical Glicksberg theorem in \cite{glicksberg}
that they have the same compact sets as well. Hence
$G^\wedge=(G^{\wedge},\tau_{co})$ is topologically isomorphic to
$H^\wedge$ and, since $H$ is reflexive, $G^{\wedge\wedge}\cong H$.
This implies our claim.
\end{example}

\begin{example}\label{negative2}
Let $I$ be an arbitrary uncountable index set and $n\ge 2$ a natural
number. For every point $x\in {\mathbb Z}(n)^I$, let ${\rm supp}(x)$
be the set of those $i\in I$ for which $x(i)\not =0$. Then
\[
\Sigma=\{x\in{\mathbb Z}(n)^I: |{\rm supp}(x)|\leq\omega\}
\]
is a dense $\omega$-bounded subgroup of the compact topological group
${\mathbb Z}(n)^I$ (see \cite[Corollary~1.6.34]{AT}). It is shown in
\cite{ChM} that the evaluation mapping $\alpha_{\Sigma}$  is
continuous but not surjective. In fact, $\Sigma^{\wedge}$ is the
direct sum ${\mathbb Z}(n)^{(I)}$ with the discrete topology, and hence the
bidual group $\Sigma^{\wedge \wedge}$ is the full product
${\mathbb Z}(n)^{I}$ with the Tychonoff product topology.
\end{example}

Let us say that a subgroup $D$ of a topological Abelian group $G$ is
\textit{$h$-embedded\/} in $G$ if every (not necessarily continuous)
homomorphism $f$ of $D$ to a compact Abelian topological group $K$
can be extended to a \textit{continuous\/} homomorphism
$\tilde{f}\colon G\to K$. This is equivalent to saying that every
homomorphism $f\colon D\to{\mathbb T}$ can be extended to a
\textit{continuous\/} character $\tilde{f}\colon G\to{\mathbb T}$.
Note that a subgroup $D$ of a precompact Abelian group $G$ is
$h$-embedded in $G$ if and only if the topology induced on $D$ by
$G$ coincides with the Bohr topology of $D$.

It is immediate from the definition that all homomorphisms of an
$h$-embedded subgroup $D$ of a topological Abelian group $G$ to
compact Abelian topological groups are continuous. In fact, since
precompact topological groups are subgroups of compact groups, every
homomorphism of such a subgroup $D\subset G$ to a precompact Abelian
group is continuous. Discrete subgroups of topological groups are
good candidates to be $h$-embedded in the enveloping groups, but an
$h$-embedded subgroup need not be discrete, even if the enveloping
group is compact.

\section{Reflexivity of pseudocompact groups}
In this section we present a sufficient condition for reflexivity of
pseudocompact groups. It turns out that the absence of infinite
compact subsets does the job (Theorem~\ref{fincompreflex}).

\begin{proposition}\label{Pro:1}
Let $G$ be a topological Abelian group such that every countable
subgroup of $G$ is $h$-embedded. Then the countable subgroups of $G$
are closed, the compact subsets of $G$ are finite, and $G$ is
sequentially closed in its completion $\varrho{G}$.
\end{proposition}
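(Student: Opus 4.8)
The plan is to establish the three conclusions in the order in which they are stated, since each one is used in the proof of the next. (Observe first that $G$ is necessarily Hausdorff: if $x\neq 0$, the cyclic subgroup $\langle x\rangle$ is countable, hence $h$-embedded, and it carries a homomorphism into $\mathbb{T}$ that is nontrivial at $x$; extending it to a continuous character of $G$ produces a continuous character separating $x$ from $0$.)

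To prove that a countable subgroup $C$ is closed, I would take a point $x\in\overline{C}$ and show $x\in C$. Suppose not. Then $C^{*}=\langle C,x\rangle$ is still countable, hence $h$-embedded, and $C^{*}/C$ is a nontrivial cyclic group generated by $x+C$. Since $\mathbb{T}$ is divisible, there is a homomorphism $C^{*}/C\to\mathbb{T}$ sending $x+C$ to a point other than $1$; pulling it back along the quotient map gives $f\colon C^{*}\to\mathbb{T}$ with $f(C)=\{1\}$ and $f(x)\neq 1$. Since $C^{*}$ is $h$-embedded, $f$ extends to a continuous character $\tilde f\colon G\to\mathbb{T}$. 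Then $C\subseteq\tilde f^{-1}(1)$, which is closed, so $\tilde f(\overline{C})=\{1\}$, contradicting $\tilde f(x)=f(x)\neq 1$. Hence $C$ is closed.

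For the second (and main) assertion, suppose toward a contradiction that $K\subseteq G$ is an infinite compact set. Pick a countably infinite subset $D\subseteq K$ and put $C=\langle D\rangle$. By the first assertion $C$ is closed, so $K\cap C$ is a closed subspace of $K$, hence compact, and it is infinite since it contains $D$. Now the topology induced on $C$ by $G$ is finer than the Bohr topology $\omega(C,Hom(C,\mathbb{T}))$ of $C$, because every homomorphism $C\to\mathbb{T}$ extends to a continuous character of $G$ and is therefore continuous on $C$. Thus $K\cap C$ would be an infinite compact subset of $C$ in its Bohr topology, which is impossible: by the Glicksberg theorem \cite{glicksberg}, a subset of an abelian group that is compact in the Bohr topology is already compact in the discrete topology, i.e.\ finite. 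I expect this step to be the main obstacle: the decisive idea is to descend to a countable $h$-embedded subgroup, where the ambient topology dominates the Bohr topology and Glicksberg's theorem can be applied.

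The third assertion then follows easily. Regard $G$ as a dense topological subgroup of $\varrho G$ and let $(x_{n})$ be a sequence in $G$ with $x_{n}\to x\in\varrho G$. Being convergent, the sequence is Cauchy, so $x_{n+1}-x_{n}\to 0$ in $\varrho G$; as these differences lie in $G$ and $G$ carries the subspace topology, the convergence takes place in $G$ as well. But by the second assertion $G$ has no nontrivial convergent sequences — a convergent sequence together with its limit is a compact, hence finite, subset of $G$, and in a Hausdorff space such a sequence is eventually constant — so $x_{n+1}-x_{n}=0$ for all large $n$. Thus $(x_{n})$ is eventually equal to some $x_{N}\in G$, whence $x=x_{N}\in G$, and $G$ is sequentially closed in $\varrho G$.
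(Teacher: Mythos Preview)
Your proof is correct and follows essentially the same route as the paper's: separate a point outside a countable subgroup by extending a character, then pass an infinite compact set down to a countable $h$-embedded subgroup where the induced topology refines the Bohr topology, and finally use the difference sequence $x_{n+1}-x_n$ for sequential closedness. The only minor variation is that in the second step the paper observes that the resulting countable compact set is metrizable and hence contains a nontrivial convergent sequence, contradicting the absence of such sequences in the Bohr topology (citing \cite{Lep} or \cite[Theorem~9.9.30]{AT}), whereas you invoke Glicksberg's theorem for discrete groups directly; both finish the argument equally well.
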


\begin{proof}
Let $K$ be a countable subgroup of $G$ and $x\in G\setminus K$.
Denote by $K_x$ the subgroup of $G$ generated by the set
$K\cup\{x\}$. The group $K_x$ is countable and $h$-embedded in $G$.
Since $K_x$ is Abelian, there exists a homomorphism $h\colon
K_x\to{\mathbb T}$ such that $h(x)\neq 1$ and $h(K)=\{1\}$. Let $f$
be a continuous extension of $h$ to a homomorphism of $G$ to
${\mathbb T}$. Then $U=f^{-1}({\mathbb T} \setminus\{1\})$ is an
open neighborhood of $x$ in $G$ and $U\cap K=\emptyset$, i.~e.,
$x\notin\overline{K}$. This proves that $K$ is closed in $G$.

Let us show that all compact subsets of $G$ are finite. Suppose to
the contrary that $C$ is an infinite compact subset of $G$. Take a
countable infinite subset $S$ of $C$ and consider the subgroup
$K=\langle S\rangle$ of $G$ generated by $S$. Then $K$ is a
countable subgroup of $G$ and, as we have just shown, $K$ is closed
in $G$. Hence $P=C\cap K$ is a closed subset of $C$, whence it
follows that $P$ is compact. Since $S\subset P\subset K$, the set
$P$ is countable and infinite. Therefore, $P$ is metrizable and
contains non-trivial convergent sequences. However, since $K$ is
$h$-embedded in $G$, the topology that $K$ inherits from $G$ is
finer than the largest precompact topology $\tau_b(K)$ on the
abstract group $K$. It remains to recall a well-known fact that an
Abelian group with the Bohr topology does not contain non-trivial
convergent sequences (see \cite{Lep} or \cite[Theorem~9.9.30]{AT}).
Clearly, this contradicts the inclusion $P\subset K$. We have thus
proved the second part of the proposition.

Finally, suppose that a sequence $\{x_n: n\in\omega\}$ of elements of
$G$ converges to an element $y\in \varrho{G}\setminus G$. Then the
nontrivial sequence $\{x_{n+1}-x_n: n\in\omega\}$ converges to the
neutral element of $G$, which is impossible since $G$ does not
contain infinite compact subsets. Hence $G$ is sequentially closed
in $\varrho{G}$.
\end{proof}

The following important fact was proved by Raczkowski and Trigos in
\cite[3.1]{RT}.

\begin{lemma}\label{racztrigos}
If $G$ is a precompact Abelian group, then the
evaluation mapping $\alpha_G$ is a topological isomorphism of $G$
onto $((G^\wedge,\tau_p)^\wedge,\tau_p).$
\end{lemma}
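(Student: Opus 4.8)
The plan is to prove Lemma~\ref{racztrigos} by showing that $\alpha_G$ lands in $(G^\wedge,\tau_p)^\wedge$ and is a homeomorphism onto the latter equipped with $\tau_p$.

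First I would observe that since $G$ is precompact, by the Comfort--Ross theorem its topology is $\omega(G,G^\wedge)$, so each $x\in G$ defines a $\tau_p$-continuous character on $G^\wedge$, namely $\chi\mapsto\chi(x)$; this is precisely $\alpha_G(x)$. Hence $\alpha_G$ maps $G$ into $(G^\wedge,\tau_p)^\wedge$. Injectivity of $\alpha_G$ is already noted in the excerpt (it follows from $G$ being precompact, hence sitting inside a compact, hence reflexive, group). So it remains to prove surjectivity onto $(G^\wedge,\tau_p)^\wedge$ and bicontinuity for the topology $\tau_p$ on the bidual.

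For surjectivity, I would take an arbitrary $\tau_p$-continuous character $\phi$ on $G^\wedge$ and show it has the form $\alpha_G(x)$ for some $x\in G$. The key point is that $\tau_p$-continuity of $\phi$ means $\phi$ is continuous for the topology of pointwise convergence on the elements of $G$; since polars of finite subsets of $G$ form a neighborhood base at the identity of $(G^\wedge,\tau_p)$, there is a finite set $F\subset G$ with $\phi(F^{\vartriangleright})\subset\mathbb{T}_+$, i.e. $\phi$ is "equicontinuous" witnessed by a finite set. Passing to the completion $\varrho G$, which is compact and hence reflexive, one identifies $(G^\wedge,\tau_p)$ with a subgroup of $(\varrho G)^\wedge$ (the continuous characters of a dense subgroup of a compact group extend uniquely, and $\tau_p$ on $G^\wedge$ corresponds to the weak topology); then $\phi$ extends to a character of $(\varrho G)^\wedge$ that is again continuous for the appropriate topology, and reflexivity of $\varrho G$ gives a point $g\in\varrho G$ inducing $\phi$. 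The finite-support bound forces $g$ to lie in $G$ itself (this is where precompactness of $G$, i.e. $G$ carrying the topology $\omega(G,G^\wedge)$, is used: evaluation at $g$ being determined on $F^{\vartriangleright}$ constrains $g$ to within an arbitrarily small $\omega(G,G^\wedge)$-neighborhood of $G$, hence into $G$).

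For the topological part, bicontinuity with respect to $\tau_p$ on the bidual is essentially a formal matching of neighborhood bases: $\alpha_G$ carries the $\omega(G,G^\wedge)$-basic neighborhood determined by a finite set $E\subset G^\wedge$ onto the $\tau_p$-basic neighborhood of $(G^\wedge,\tau_p)^\wedge$ determined by the same finite set $E$, using $\alpha_G(E^{\vartriangleleft})=\alpha_G(G)\cap E^{\vartriangleright}$ (the inverse polar in $G$ mapping onto the polar in the bidual). The main obstacle I anticipate is the surjectivity argument — specifically, showing that a $\tau_p$-continuous character of $G^\wedge$ actually comes from a point of $G$ rather than merely from a point of the completion $\varrho G$; handling this cleanly requires exploiting the finite set $F$ in the equicontinuity condition together with the description of the precompact topology via $\omega(G,G^\wedge)$, and being careful that the density of $G$ in $\varrho G$ together with this finiteness pins the representing point down inside $G$.
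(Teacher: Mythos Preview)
The paper does not prove this lemma; it simply records it as a result of Raczkowski and Trigos-Arrieta \cite{RT} and uses it as a black box. So there is no in-paper argument to compare against, and the question is whether your outline stands on its own.

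Injectivity, the containment $\alpha_G(G)\subset (G^\wedge,\tau_p)^\wedge$, and the bicontinuity step (matching polar neighborhoods on both sides) are all fine. The genuine gap is in surjectivity. After producing a point $g\in\varrho G$ representing $\phi$ via Pontryagin duality for the compact group $\varrho G$, you assert that the finite set $F$ witnessing $\tau_p$-continuity forces $g\in G$, because ``evaluation at $g$ being determined on $F^{\vartriangleright}$ constrains $g$ to within an arbitrarily small $\omega(G,G^\wedge)$-neighborhood of $G$, hence into $G$.'' But $G$ is \emph{dense} in $\varrho G$, so every element of $\varrho G$ already lies in every neighborhood of $G$; lying in arbitrarily small neighborhoods of $G$ only says $g\in\overline{G}=\varrho G$, which is no restriction at all. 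You correctly flag this step as the main obstacle, but the proposed resolution does not actually pin $g$ down inside $G$.

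The clean repair is to bypass the completion and apply the Comfort--Ross theorem a second time, now to the group $G^\wedge$ rather than to $G$. By definition, $\tau_p$ on $G^\wedge$ is exactly $\omega\bigl(G^\wedge,\alpha_G(G)\bigr)$, the precompact topology induced by the point-separating subgroup $\alpha_G(G)\le Hom(G^\wedge,\mathbb{T})$. Comfort--Ross \cite{CR1} then gives directly that the continuous character group of $(G^\wedge,\tau_p)$ equals $\alpha_G(G)$, which is precisely the desired surjectivity. With this in place your bicontinuity paragraph completes the proof, and the detour through $\varrho G$ becomes unnecessary.
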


\begin{theorem}\label{fincompreflex}
If every compact subset of a pseudocompact Abelian group $G$ is
finite, then $G$ is reflexive.
\end{theorem}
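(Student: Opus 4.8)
The plan is to use Lemma~\ref{racztrigos} to reduce reflexivity to a single statement about compact subsets of $G^{\wedge}$, and then to settle that statement by a Grothendieck-type argument in the space of continuous functions on $G$, combined with Glicksberg's theorem.

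First I would observe that, since every compact subset of $G$ is finite, the polars of compact subsets of $G$ are exactly the polars of finite subsets of $G$; as the former form a neighborhood base at the identity of $(G^{\wedge},\tau_{co})$ and the latter one at the identity of $(G^{\wedge},\tau_p)$, it follows that $\tau_{co}=\tau_p$ on $G^{\wedge}$. In particular $(G^{\wedge},\tau_{co})$ and $(G^{\wedge},\tau_p)$ have the same dual group carrying the same topology of pointwise convergence, so Lemma~\ref{racztrigos} says exactly that $\alpha_G$ is a topological isomorphism of $G$ onto $(G^{\wedge\wedge},\tau_p)$; in particular $\alpha_G$ is onto $G^{\wedge\wedge}$. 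Since $G$ is precompact, $\alpha_G$ is also injective and open onto its image for the topology $\tau_{co}$. Hence the only missing ingredient is continuity of $\alpha_G\colon G\to(G^{\wedge\wedge},\tau_{co})$, which by \cite[Proposition~5.10]{Au} is equivalent to requiring that every $\tau_{co}$-compact subset of $G^{\wedge}$ be equicontinuous; by the equality $\tau_{co}=\tau_p$, this is the same as requiring every $\tau_p$-compact subset $C$ of $G^{\wedge}$ to be equicontinuous.

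To prove this I would pass to the completion $K=\varrho G$, which is compact because $G$ is precompact, and which moreover equals $\beta G$ because $G$ is pseudocompact; since every continuous character of the dense subgroup $G$ extends uniquely to a continuous character of $K$, restriction to $G$ identifies $K^{\wedge}$ with $G^{\wedge}$ as abstract groups. Now regard a $\tau_p$-compact $C\subset G^{\wedge}$ as a subset of $C(G)$ with the topology of pointwise convergence: it is pointwise bounded and pointwise compact, so Grothendieck's theorem in its form valid for pseudocompact spaces yields that the topology of pointwise convergence on $G$ and that of pointwise convergence on $\beta G=K$ agree on $C$. Thus $C$ is a compact subset of $(K^{\wedge},\tau_p)$. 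Since $K$ is compact, Pontryagin duality gives $K=\mathrm{Hom}(K^{\wedge},\mathbb{T})$, so $\tau_p$ on the discrete group $K^{\wedge}$ is its Bohr topology; Glicksberg's theorem \cite{glicksberg} then forces the compact set $C$ to be finite. Finally, every finite set $\{\chi_1,\dots,\chi_k\}\subset G^{\wedge}$ is equicontinuous, being contained in $\bigl(\bigcap_{j=1}^{k}\chi_j^{-1}(\mathbb{T}_+)\bigr)^{\vartriangleright}$, where the indicated intersection is a neighborhood of $0$ in $G$. This gives the required equicontinuity, and therefore the reflexivity of $G$.

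The main obstacle is exactly this continuity of $\alpha_G$: precompactness alone does not suffice, as Example~\ref{negative1} shows, and pseudocompactness must be spent here, entering through the identity $\varrho G=\beta G$ which makes Grothendieck's theorem applicable. Two points require care: one must invoke the version of Grothendieck's theorem that holds for pseudocompact (not merely countably compact) spaces, and one must check that extension by continuity sends the elements of $C$ to genuine characters of $K$, which it does by density of $G$ in $K$.
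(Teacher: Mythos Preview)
Your proof is correct and follows essentially the same route as the paper: both arguments combine the equality $\tau_{co}=\tau_p$ on $G^\wedge$ (from the hypothesis on $G$) with Lemma~\ref{racztrigos} and the fact that for a pseudocompact Abelian group $G$ every $\tau_p$-compact subset of $G^\wedge$ is finite. The only difference is that the paper quotes this last fact directly from \cite[Proposition~4.4]{HM}, whereas you supply a self-contained proof of it via the identification $\varrho G=\beta G$, a Grothendieck-type argument, and Glicksberg's theorem; your detour through equicontinuity and continuity of $\alpha_G$ is then slightly more than is needed, since once the compact subsets of $G^\wedge$ are finite one has $\tau_{co}=\tau_p$ on $G^{\wedge\wedge}$ as well and Lemma~\ref{racztrigos} finishes the argument immediately, exactly as in the paper.
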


\begin{proof}
It follows from \cite[Proposition~4.4]{HM} that for any
pseudocompact group $G$, the compact subsets of $(G^\wedge,\tau_p)$
are finite. In particular,
\[
\left((G^\wedge,\tau_p)^\wedge, \tau_{co}\right)\cong
\left((G^\wedge,\tau_p)^\wedge,\tau_p\right).
\]
Since all compact subsets of $G$ are finite, we have that
$\tau_{co}=\tau_p$ on $G^\wedge$. Using Lemma \ref{racztrigos} we
conclude that
\[
\left((G^\wedge,\tau_{co})^\wedge, \tau_{co}\right)\cong
\left((G^\wedge,\tau_p)^\wedge,\tau_p\right)\cong G,
\]
i.~e., $G$ is reflexive.
\end{proof}

\begin{remark}
The condition in Theorem~\ref{fincompreflex} that $G$ does not
contain infinite compact subsets is sufficient but not necessary.
Indeed, given  a pseudocompact noncompact reflexive group $P$ and
an infinite compact group $K$, the product group $P\times K$ is
pseudocompact, noncompact, reflexive, and contains an infinite
compact subset homeomorphic to $K$.
\end{remark}

The following fact is an immediate consequence of
Proposition~\ref{Pro:1} and Theorem~\ref{fincompreflex}.

\begin{theorem}\label{Phreflexive}
If $G$ is a pseudocompact Abelian group such that every countable
subgroup of $G$ is $h$-embedded, then $G$ is reflexive.
\end{theorem}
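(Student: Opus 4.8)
The plan is simply to chain together the two preceding results, since the hypothesis of this theorem is tailored precisely to feed both of them. First I would invoke Proposition~\ref{Pro:1}: the assumption that every countable subgroup of $G$ is $h$-embedded is exactly what that proposition requires, so it yields at once that every compact subset of $G$ is finite (and, as a byproduct, that the countable subgroups of $G$ are closed and that $G$ is sequentially closed in $\varrho G$, though neither extra conclusion is needed here).

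Next, $G$ is pseudocompact by hypothesis and, by the previous step, contains no infinite compact subsets. These are precisely the hypotheses of Theorem~\ref{fincompreflex}, which therefore applies directly and tells us that $G$ is reflexive. That completes the argument.

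Since the two intervening results have already done all the substantive work — Proposition~\ref{Pro:1} extracting the absence of infinite compact subsets from the $h$-embedding condition, and Theorem~\ref{fincompreflex} deriving reflexivity from pseudocompactness together with that absence — there is no genuine obstacle to overcome; the only thing to check is that the hypotheses line up, which they do verbatim. If a self-contained proof were desired, one could inline the two arguments: use an $h$-embedded extension of a character separating a point from a countable subgroup to see that countable subgroups are closed, hence that an infinite compact subset would contain an infinite closed metrizable subset carrying the Bohr topology and thus a nontrivial convergent sequence, which is impossible; then combine the finiteness of the compact subsets of $(G^\wedge,\tau_p)$ (valid for any pseudocompact $G$) with the resulting coincidence $\tau_{co}=\tau_p$ on $G^\wedge$ and Lemma~\ref{racztrigos}. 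The modular proof via the two cited statements is, however, the cleaner one, and is the one I would write.
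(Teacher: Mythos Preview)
Your proposal is correct and matches the paper's own argument exactly: the paper states the theorem as an immediate consequence of Proposition~\ref{Pro:1} and Theorem~\ref{fincompreflex}, which is precisely the two-step chain you describe. Nothing further is needed.
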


The following result, proved in \cite{Tk1}, will provide a first
example of a noncompact group with the above properties and will be
applied in our further arguments. Below, we denote by ${\mathfrak
c}$ the power of the continuum, so that ${\mathfrak c}=2^\omega$.
\begin{theorem}\label{examplePhZ2}
There exists a dense pseudocompact subgroup $G$ of the product group
${\mathbb Z}(2)^{\mathfrak c}$ such that all countable subgroups of $G$ are
$h$-embedded in $G$.
\end{theorem}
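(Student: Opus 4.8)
The plan is to build the group $G$ as a carefully chosen dense subgroup of the compact group $K=\mathbb{Z}(2)^{\mathfrak c}$, using a transfinite construction of length $\mathfrak c$ in which at each stage we both (a) enlarge $G$ so as to guarantee pseudocompactness and (b) control countable subgroups so that the $h$-embedding property is achieved. The key tool for (a) is the characterization of pseudocompact subgroups as the $G_\delta$-dense subgroups of a compact group: it suffices to arrange that $G$ meets every nonempty $G_\delta$-subset of $K$. The key tool for (b) is a bookkeeping argument: at stage $\alpha$ we take care of a single pair consisting of a countable subgroup $C_\alpha$ (of the part of $G$ already constructed) together with a homomorphism $h_\alpha\colon C_\alpha\to\mathbb{T}$, and we must extend $h_\alpha$ to a \emph{continuous} character of the final group $G$. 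Since $K$ has exponent $2$ and $\mathfrak c$-many $G_\delta$-subsets, a suitable enumeration lets a single transfinite recursion of length $\mathfrak c$ handle all such tasks.

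The heart of the construction is the extension step. Fix a countable subgroup $C$ sitting inside the current approximation to $G$ and a homomorphism $h\colon C\to\mathbb{Z}(2)$. Because $C$ is countable and the ambient group is a product of copies of $\mathbb{Z}(2)$, $C$ is contained in a countable subproduct, i.e.\ there is a countable $A\subset\mathfrak c$ with $C\subset\mathbb{Z}(2)^A\times\{0\}$; one then wants a continuous character $\tilde h$ of $K$ — that is, an element $\tilde h$ of the dual $\mathbb{Z}(2)^{(\mathfrak c)}$, depending on only finitely many coordinates — agreeing with $h$ on $C$. This is the step that forces the extra coordinates: a fixed continuous character depends on finitely many coordinates, so it cannot in general extend $h$ unless $h$ itself already ``lives'' on finitely many coordinates of $C$. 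The standard remedy, which I expect the paper uses, is to first embed $C$ into a slightly larger countable subgroup $C^{*}$ of $K$ (adding new coordinates outside $A$) and to extend $h$ to $C^{*}$ in such a way that the extension is the restriction of a continuous character of $K$; by density these new coordinates can simultaneously be used to help with the $G_\delta$-density requirement. Iterating, $G$ will be the union of an increasing $\mathfrak c$-chain of countable subgroups, each equipped with a coherent system of characters.

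The remaining ingredients are routine once the recursion is set up. That $G$ is dense in $K$ is automatic since at each stage we can include a countable dense subgroup and keep adding points; $G_\delta$-density follows because there are only $\mathfrak c$-many $G_\delta$-sets in $K$ and the enumeration visits each one, adding a point of $G$ inside it — hence $G$ is pseudocompact by the Comfort–Ross characterization cited in the introduction. For the $h$-embedding property: any countable subgroup $C$ of the finished group $G$ is, by a cofinality argument (using that $\mathrm{cf}(\mathfrak c)>\omega$ is \emph{not} needed here, only that $C$ is built from countably many stages), contained in one of the stage-$\alpha$ subgroups; and any homomorphism $h\colon C\to\mathbb{T}$ has countable image, hence lands in $\mathbb{Z}(2)\subset\mathbb{T}$ after composing with the appropriate torsion structure — actually since $C$ has exponent $2$, $h$ automatically maps into the $2$-element subgroup of $\mathbb{T}$ — so the pair $(C,h)$ (or a pair refining it) was handled at some stage, yielding a continuous extension to $G$. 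The main obstacle, as indicated, is organizing the transfinite recursion so that the three demands — density in every $G_\delta$, closure of the construction under the countably many ``tasks'' determining each countable subgroup, and coherence of the extended characters — do not conflict; this is a delicate but by-now-standard bookkeeping argument, and I would expect the detailed verification of coherence (that an extension produced at stage $\alpha$ is not later destroyed) to be where the real care is required.
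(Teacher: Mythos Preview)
The paper does not supply its own proof of this theorem; it simply attributes the result to \cite{Tk1} and uses it as a black box. So at the level of detail you have written there is nothing in the paper to compare against.

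As for your outline itself: the architecture (transfinite recursion of length $\mathfrak c$, securing pseudocompactness via $G_\delta$-density, and bookkeeping over pairs $(C,h)$ to force the $h$-embedding property) is indeed the standard approach and is what \cite{Tk1} does. But the ``heart of the construction'' paragraph contains a real confusion. First, the assertion that a countable subgroup $C$ of $\mathbb{Z}(2)^{\mathfrak c}$ must lie in a countable subproduct $\mathbb{Z}(2)^{A}\times\{0\}$ is false (the cyclic group generated by the constant-$1$ element is a counterexample). Second, and more seriously, your proposed ``remedy'' of enlarging $C$ to some $C^{*}$ cannot work: if $h\colon C\to\mathbb{Z}(2)$ is not the restriction of a continuous character of $K$, then no extension of $h$ to a larger subgroup can be either, since restricting back to $C$ recovers $h$. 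Adding \emph{elements} to the group never turns a discontinuous character into a continuous one.

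The correct viewpoint is dual to the one you describe. One fixes an abstract Boolean group $G$ of cardinality $\mathfrak c$ and builds the \emph{embedding} $\varphi\colon G\hookrightarrow\mathbb{Z}(2)^{\mathfrak c}$ by choosing the coordinate characters $\varphi_\alpha=\pi_\alpha\circ\varphi\colon G\to\mathbb{Z}(2)$ one at a time. At the stage handling a pair $(C,h)$ one simply lets $\varphi_\alpha$ be any extension of $h$ to all of $G$ (which exists since $G$ is a $\mathbb{Z}(2)$-vector space); then the projection $\pi_\alpha$ is automatically a continuous character of $\varphi(G)$ extending $h$. The $G_\delta$-density requirement becomes an independence condition on the family $\{\varphi_\alpha\}$ (every countable subfamily must map $G$ onto the corresponding subproduct), and this can be arranged simultaneously because $G$ has $2^{\mathfrak c}$ characters while only $\mathfrak c$ many tasks of each kind arise. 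So your recursion is running over the wrong objects: it should add \emph{coordinates}, not group elements.
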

%

\begin{remark}\label{preprintGM}
There are many other examples of pseudocompact groups all whose
countable subgroups are $h$-embedded. Answering a question posed by
D.~Dikranjan and D.~Shakhmatov in \cite{DD}, J.~Galindo and
S.~Macario (\cite{GM}) have recently found conditions under which a
pseudocompact group admits another pseudocompact group topology
which renders all countable subgroups $h$-embedded. This reference,
which recently came to the authors' attention, contains several
generalizations of Theorem~\ref{Phreflexive}.
\end{remark}

The following two results provide more information regarding the
duals of precompact and pseudocompact groups. We start with
considering pseudocompact groups:

\begin{lemma}\label{Le:N}
The following hold true for a pseudocompact Abelian group $G$:
\begin{enumerate}
\item[a)] All countable subgroups of $G^\wedge$ are $h$-embedded
in $G^\wedge$.
\item[b)] Every compact subset of $G^\wedge$ is finite.
\end{enumerate}
\end{lemma}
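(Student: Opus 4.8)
The plan is to deduce both statements from the corresponding facts about $G$ itself, transported through duality, together with the characterization of pseudocompactness via countable subgroups of the dual.

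For part (a), recall (as quoted in the excerpt from \cite[Proposition~3.4]{HM}) that since $G$ is pseudocompact, for every countable subgroup $H$ of $G^\wedge$ the largest precompact group topology on $H$ coincides with $\omega(H,G)$, the topology of pointwise convergence on elements of $G$. Now I would fix a countable subgroup $H$ of $G^\wedge$ and a homomorphism $f\colon H\to\mathbb{T}$; the goal is to extend $f$ to a continuous character of $G^\wedge$. The topology $G^\wedge$ induces on $H$ is the compact-open topology, which is at least as fine as $\omega(H,G)=\tau_b(H)$, so in particular every character of $H$ continuous for the Bohr topology is continuous on $H$ as a subgroup of $(G^\wedge,\tau_{co})$. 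But $f$ need not be Bohr-continuous; what one actually needs is that the abstract homomorphism $f$ lifts to a \emph{continuous} character of all of $G^\wedge$. Here I would use that elements of $G$ act on $G^\wedge$ as continuous characters via $\alpha_G$, and that by the Comfort--Ross theorem $H$ with its Bohr topology embeds into its (compact) completion; since $H$ is $h$-embedded in $(G^\wedge,\tau_b)$ trivially (equality of topologies), it suffices to show that continuous characters of $G^\wedge$ separate $G^\wedge/\overline{H}$ appropriately and that the extension from the compact completion of $H$ restricts correctly. More directly: by Lemma~\ref{racztrigos} applied to the precompact group $(G^\wedge,\tau_p)$, its dual (with $\tau_p$) is topologically $G$ again, so continuous characters of $(G^\wedge,\tau_p)$ are exactly evaluations at points of $G$; restricting such evaluations to the countable subgroup $H$ we recover precisely the $\tau_b(H)$-continuous characters of $H$ (again by \cite[Proposition~3.4]{HM}). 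Combining, every $\tau_b(H)$-continuous character of $H$ extends to a $\tau_p$-continuous — hence $\tau_{co}$-continuous — character of $G^\wedge$, and since on a countable group \emph{every} homomorphism into $\mathbb{T}$ is $\tau_b$-continuous, we are done: all homomorphisms $H\to\mathbb{T}$ extend continuously, i.e.\ $H$ is $h$-embedded in $G^\wedge$.

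For part (b), I would apply part (a) together with Proposition~\ref{Pro:1}. Indeed, Proposition~\ref{Pro:1} asserts that if every countable subgroup of a topological Abelian group is $h$-embedded, then all its compact subsets are finite. By part (a) this hypothesis holds for $G^\wedge$, so every compact subset of $G^\wedge$ is finite.

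The main obstacle is part (a): getting from "$f$ is an abstract homomorphism of the countable group $H$" to "$f$ extends to a \emph{continuous} character of $G^\wedge$". The delicate point is that $h$-embedding requires continuity of the extension on the whole dual, not merely agreement of two topologies on $H$; the bridge is the identification, via Lemma~\ref{racztrigos} and \cite[Proposition~3.4]{HM}, of $\tau_p$-continuous characters of $G^\wedge$ with points of $G$ and of $\tau_b(H)$-continuous characters of $H$ with the restrictions of those points — so the real content is that these two "restriction" pictures are compatible, which is exactly what the Hernández--Macario characterization of pseudocompactness provides. One should also verify $\tau_p=\tau_{co}$ is not needed here; it suffices that $\tau_{co}\supseteq\tau_p$ on $G^\wedge$, so a $\tau_p$-continuous extension is automatically $\tau_{co}$-continuous.
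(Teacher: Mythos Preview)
Your approach is essentially the paper's: for (a) combine \cite[Proposition~3.4]{HM} (so $\omega(H,G)$ is the Bohr topology on $H$) with Comfort--Ross to produce $g\in G$ with $\alpha_G(g)\!\upharpoonright_H=f$, and for (b) feed (a) into Proposition~\ref{Pro:1}. Two remarks on your write-up of (a): the worry ``$f$ need not be Bohr-continuous'' is misplaced, since every homomorphism $H\to\mathbb{T}$ is Bohr-continuous by the very definition of the Bohr topology; and the detour through Lemma~\ref{racztrigos} is unnecessary, because once you have $g$, the evaluation $\alpha_G(g)$ is automatically a $\tau_{co}$-continuous character of $G^\wedge$---that is all the paper uses.
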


\begin{proof}
a) Consider a countable subgroup $H$ of $G^{\wedge}$ and an
arbitrary homomorphism $f\colon H\to{\mathbb T}$. Since $G$ is
pseudocompact, $\omega(H,G)$ is the maximal precompact topology on
$H$ (see \cite[Proposition~3.4]{HM}). This implies, by
Comfort--Ross' theorem in \cite{CR1}, that there exists an element
$g\in G$ with $\alpha_G(g)\upharpoonright_H=f$. In particular,
$\alpha_G(g)$ is a continuous character on $G^\wedge$ which extends
$f$, and $H$ is $h$-embedded in $G^\wedge$.

b) By a), all countable subgroups of $G^\wedge$ are $h$-embedded.
Hence Proposition~\ref{Pro:1} implies that every compact subset of
$G^\wedge$ is finite. Alternatively, one can deduce the conclusion
from \cite[Proposition~4.4]{HM}.
\end{proof}
In the case of a reflexive group $G$, the precompactness and
pseudocompactness of $G$ can be completely characterized by
corresponding properties of $G^\wedge$:
\begin{proposition}\label{PR}
Let $G$ be a reflexive group. Then:
\begin{itemize}
\item[a)] $G$ is precompact iff the compact subsets
          of $G^\wedge$ are finite.
\item[b)] $G$ is pseudocompact iff the countable subgroups of
$G^\wedge$ are $h$-embedded.
\end{itemize}
\end{proposition}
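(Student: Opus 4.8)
The plan is to prove each equivalence by combining the results already established for precompact and pseudocompact groups with the reflexivity hypothesis, which lets us identify $G$ with $G^{\wedge\wedge}$ and thus transfer properties back and forth between $G$ and its dual.

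For part a), one direction is essentially Lemma~\ref{Le:N}(b): if $G$ is precompact then, being in particular pseudocompact-free of nothing we need, we actually only know $G$ is precompact, not pseudocompact, so instead I would argue directly. If $G$ is precompact, then by the Comfort--Ross theorem the topology of $G$ is $\omega(G,G^\wedge)$, i.e.\ $\tau_p$ and the original topology of $G$ coincide; equivalently, on $G^\wedge$ the compact-open topology induced by viewing $G$ as $(G^\wedge)^\wedge$ agrees with $\tau_p$. More cleanly: since $G$ is reflexive, $G^\wedge$ is reflexive as well, and $G^{\wedge\wedge}\cong G$ is precompact; now apply to the group $G^\wedge$ the fact (used in Example~\ref{negative2}-type reasoning, or simply Glicksberg/Comfort--Ross) that a reflexive group whose bidual is precompact must have only finite compact subsets — indeed, equicontinuous subsets of $G^\wedge$ are precompact, and if $G$ is precompact the compact subsets of $G$ are finite, so the dual $\tau_{co}=\tau_p$ argument shows compact subsets of $G^\wedge$ are finite. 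Conversely, if the compact subsets of $G^\wedge$ are finite, then on $G^{\wedge\wedge}$ we have $\tau_{co}=\tau_p$, so $G\cong(G^\wedge,\tau_{co})^\wedge=(G^\wedge,\tau_p)^\wedge$, and a group of the form $(H,\tau_p)^\wedge$ with $\tau_p$ the pointwise topology is always precompact (it sits inside $\mathbb{T}^H$ as a group of characters), hence $G$ is precompact.

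For part b), the forward implication is exactly Lemma~\ref{Le:N}(a): if $G$ is pseudocompact then all countable subgroups of $G^\wedge$ are $h$-embedded, with no reflexivity needed. For the converse, suppose every countable subgroup of $G^\wedge$ is $h$-embedded. Then by Proposition~\ref{Pro:1} applied to the group $G^\wedge$, every compact subset of $G^\wedge$ is finite, so by part a) the group $G$ is precompact. It remains to upgrade precompactness to pseudocompactness. Here I would invoke the Hern\'andez--Macario characterization quoted in the Introduction (\cite[Proposition~3.4]{HM}): a precompact Abelian group $G$ is pseudocompact iff for every countable subgroup $H$ of $G^\wedge$ the largest precompact topology on $H$ coincides with $\omega(H,G)$. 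But $\omega(H,G)=\omega(H,G^{\wedge\wedge})$ via $\alpha_G$, and the condition that $H$ be $h$-embedded in $G^\wedge$ says precisely that the topology $H$ inherits from $G^\wedge$ (which, since $G^\wedge$ need not be precompact, we must handle — but $H$ is countable and $h$-embedded means every homomorphism $H\to\mathbb{T}$ extends continuously, i.e.\ the induced topology is the Bohr topology of $H$) equals the maximal precompact topology on $H$; and by reflexivity the evaluation pairing identifies the $G$-pointwise topology on $H\subset G^\wedge$ with $\omega(H,G)$. Matching these up gives the Hern\'andez--Macario criterion, hence $G$ is pseudocompact.

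The main obstacle I anticipate is the careful bookkeeping in part b) connecting three different topologies on a countable subgroup $H\subset G^\wedge$: the topology induced by $G^\wedge$ itself, the weak topology $\omega(H,G)$ coming from evaluation at points of $G$, and the maximal precompact (Bohr) topology on $H$. Reflexivity is what forces the first two to agree (since $\tau_{co}$ and $\tau_p$ coincide on $G^\wedge$ once we know $G$ is precompact, by part a)), and $h$-embeddedness is what forces the induced topology to be the Bohr topology; the delicate point is that Proposition~\ref{Pro:1}'s hypothesis and the Hern\'andez--Macario criterion must be applied in the right order — first extract precompactness of $G$ from the finiteness of compact subsets of $G^\wedge$, then feed precompactness back in so that $\omega(H,G)=\omega(H,G^\wedge)$ is legitimate. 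Once that circle is navigated, both parts fall out of the cited results.
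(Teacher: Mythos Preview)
Your converse in (a) and both directions of (b) are essentially the paper's argument: once compact subsets of $G^\wedge$ are finite, $\tau_{co}=\tau_p$ on $G^{\wedge\wedge}$ and reflexivity gives $G\cong (G^{\wedge\wedge},\tau_p)$, which is precompact; and in (b) you correctly cite Lemma~\ref{Le:N}(a) for the forward direction and, for the converse, use Proposition~\ref{Pro:1} to get finiteness of compact subsets of $G^\wedge$, then part (a) to get precompactness of $G$, then the Hern\'andez--Macario criterion together with surjectivity of $\alpha_G$ (so every character on a countable $H\leq G^\wedge$ extends to some $\alpha_G(g)$) to conclude pseudocompactness. That is exactly how the paper proceeds.

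The genuine gap is in the forward direction of (a). Your chain of attempts never lands: Lemma~\ref{Le:N}(b) needs pseudocompactness, which you do not have; the sentence ``if $G$ is precompact the compact subsets of $G$ are finite'' is simply false (any infinite compact group is a counterexample); and the appeal to ``a reflexive group whose bidual is precompact must have only finite compact subsets'' is circular, since this is precisely the implication you are trying to prove. What the paper actually does is short and uses the polar machinery from the preliminaries: continuity of $\alpha_G$ forces every compact $K\subset G^\wedge$ to be equicontinuous, so $K\subset U^\triangleright$ for some neighborhood $U$ of the identity in $G$; precompactness of $G$ means one may take $U=F^\triangleleft$ for a finite set $F\subset G^\wedge$, whence $K\subset (F^\triangleleft)^\triangleright$. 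For finite $F$ the set $(F^\triangleright)^\triangleleft$ is finite (this is \cite[7.11]{Au}), and reflexivity gives the bipolar identification $(F^\triangleleft)^\triangleright=(F^\triangleright)^\triangleleft$, so $K$ is finite. You had the word ``equicontinuous'' in the mix, but the argument you wrote around it does not assemble these pieces.
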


\begin{proof}
(a) If $(G, {\tau})$ is precompact, then $\tau$ is the topology of
pointwise convergence on elements of $G^\wedge$, that is,
$\tau=\omega (G,G^{\wedge})$. Since every compact set $K\subset
G^\wedge$ is equicontinuous, there exists a finite set $F\subset
G^\wedge$ with $K \subset (F^\triangleleft)^{\triangleright}$. Since
$F$ is finite, so is $(F^\triangleright)^{\triangleleft}$ (see
\cite[7.11]{Au}). The reflexivity of $G$ yields that
$(F^\triangleright)^{\triangleleft}=
(F^\triangleleft)^{\triangleright}$ and, hence, $K$ is finite.

Conversely, suppose that all compact subsets of $G^\wedge$ are
finite. Then $(G^{\wedge\wedge},\tau_{p})\cong (G^{\wedge\wedge},
\tau_{co})\cong G$.

(b) The necessity follows from a) of Lemma~\ref{Le:N}. Conversely,
suppose that all countable subgroups of $G^\wedge$ are $h$-embedded.
By Proposition~\ref{Pro:1}, the compact subsets of $G^\wedge$ are
finite, so $G^{\wedge\wedge}$ carries the topology of pointwise
convergence on elements of $G^\wedge$. In other words, the
compact-open topology of $G^{\wedge\wedge}$ coincides with
$\omega(G^{\wedge\wedge},G^\wedge)$. Since $\alpha_G\colon G\to
G^{\wedge\wedge}$ is a topological isomorphism, we conclude that the
original topology of $G$ is
$\omega(G,G^\wedge)=\alpha_G^{-1}(\omega(G^{\wedge\wedge},G^\wedge))$.
Hence it suffices to show that the topology $\omega(G,G^\wedge)$ is
pseudocompact.

Take any countable subgroup $H$ of $G^\wedge$. We claim that
$\omega(H,G)$ coincides with the maximal precompact topology on the
abstract group $H$. Indeed, let $f\colon H\to{\mathbb T}$ be a homomorphism.
The subgroup $H$ is $h$-embedded in $G^\wedge$, so there exists a
continuous homomorphism $\tilde{f}\colon G^\wedge\to{\mathbb T}$ extending
$f$. Since the evaluation mapping $\alpha_G$ is surjective, there is
an element $g\in G$ with $\alpha_G(g)=\tilde{f}$. This proves our
claim. Hence \cite[Proposition~3.4]{HM} implies the pseudocompactness
of $(G,\omega(G,G^\wedge))$.
\end{proof}

It seems strange at the first sight, but Proposition~\ref{Pro:1} and
Theorem~\ref{fincompreflex} enable us to find examples of reflexive
precompact non-pseudo\-com\-pact groups. Our construction makes use
of the following result close to Proposition~\ref{Pro:1}. As usual,
we denote by $\langle x\rangle$ the cyclic subgroup of a group $L$
generated by an element $x\in L$.

\begin{lemma}\label{Le:Q}
Let $G$ be any infinite pseudocompact Boolean group all countable
subgroups of which are $h$-embedded (Theorem~\ref{examplePhZ2}).
Suppose that $C_0$ is a countable infinite subgroup  of $G$.
Consider $K=\langle x_0\rangle$ for some $x_0\in\overline{C}_0
\setminus C_0$ (the closure of $C_0$ is taken in the completion
$\varrho{G}$ of $G$) and the quotient homomorphism $\pi\colon
\varrho{G}\to\varrho{G}/K$. Then all compact subsets of the subgroup
$H=\pi(G)\subset\varrho{G}/K$ are finite and $H$ is reflexive.
\end{lemma}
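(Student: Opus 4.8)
\emph{Proof proposal.} The plan is to recognise $H$ as a topological copy of $G$ carrying a slightly coarser group topology, to locate inside $H$ a clopen subgroup of index $2$ that is topologically isomorphic to a subgroup of $G$, and then to invoke Theorem~\ref{fincompreflex}. First note that, by Proposition~\ref{Pro:1} applied to $G$, the subgroup $C_0$ is closed in $G$, so its closure $\overline{C}_0$ in $\varrho G$ meets $G$ exactly in $C_0$ and hence $x_0\notin G$. Since $G$ is Boolean, so is $\varrho G$, whence $K=\langle x_0\rangle$ has order $2$ and $K\cap G=\{0\}$; thus $\pi\upharpoonright_G\colon G\to H$ is a continuous algebraic isomorphism. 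As $\varrho G$ is compact and $K$ is finite, $\varrho G/K$ is compact and $H=\pi(G)$ is a dense subgroup of it; in particular $H$ is precompact, and being a continuous image of the pseudocompact group $G$ it is also pseudocompact.

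Next I would pin down $H^{\wedge}$. Each continuous character of $H$ extends uniquely to $\varrho G/K$, the continuous characters of $\varrho G/K$ are precisely those of $\varrho G$ that annihilate $x_0$, and each continuous character of $\varrho G$ is the unique continuous extension $\widetilde{\chi}$ of some $\chi\in G^{\wedge}$. Chasing these correspondences, the map $\chi\mapsto\chi\circ(\pi\upharpoonright_G)$ identifies $H^{\wedge}$ with the subgroup $\Gamma=\{\chi\in G^{\wedge}\colon\widetilde{\chi}(x_0)=1\}$ of $G^{\wedge}$; since $\varrho G$ is compact its characters separate points and $x_0\neq 0$, so $\Gamma$ has index $2$ in $G^{\wedge}$. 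As $H$ is precompact, its topology is $\omega(H,H^{\wedge})$, and transporting it through $\pi\upharpoonright_G$ shows that $\pi\upharpoonright_G$ is a topological isomorphism of $G'=(G,\omega(G,\Gamma))$ onto $H$. It therefore suffices to prove that all compact subsets of $G'$ are finite; reflexivity of $H$ then follows from Theorem~\ref{fincompreflex}, since $H$ is pseudocompact.

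Now fix $\chi_0\in G^{\wedge}\setminus\Gamma$, so that $G^{\wedge}=\Gamma\cup\chi_0\Gamma$. Since $G$ is Boolean, $\chi_0$ takes only the values $1$ and $-1$, so $G_0=\ker\chi_0$ is a clopen subgroup of $G$ of index $2$. As $\chi_0|_{G_0}$ is trivial, $\Gamma$ and $G^{\wedge}$ have the same family of restrictions to $G_0$, hence the topologies $\omega(G,\Gamma)$ and $\omega(G,G^{\wedge})$ agree on $G_0$; that is, $G_0$ carries inside $G'$ exactly the topology it inherits from $G$. By Proposition~\ref{Pro:1}, $G$ has no infinite compact subsets, hence neither does the subspace $G_0$, hence neither does $G_0$ regarded inside $G'$. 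Finally, any compact subset $C$ of $G'$ meets $G_0$ and its unique other coset in sets that are clopen in $C$, hence compact, and each of these is homeomorphic to a subset of $G_0\subset G'$; so both pieces are finite and $C$ is finite. Thus all compact subsets of $H$ are finite, and $H$ is reflexive.

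The routine ingredients are the algebraic observations that $x_0\notin G$ and $[G^{\wedge}:\Gamma]=2$; I expect the one genuinely delicate point to be the identification of $H^{\wedge}$ with $\Gamma$, together with the accompanying verification that $\omega(G,\Gamma)$ and $\omega(G,G^{\wedge})$ induce the same topology on the clopen subgroup $G_0$, since it is precisely this that transfers the property ``no infinite compact subsets'' from $G$ to $H$.
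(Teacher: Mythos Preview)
Your identification of $H$ with $G'=(G,\omega(G,\Gamma))$ and of $H^\wedge$ with the index-$2$ subgroup $\Gamma=\{\chi\in G^\wedge:\widetilde\chi(x_0)=1\}$ is correct and elegant, as is the observation that the two precompact topologies agree on $G_0=\ker\chi_0$. The gap is in the final step: $G_0$ is \emph{not} clopen in $G'$, so you cannot split a compact $C\subset G'$ into two compact pieces $C\cap G_0$ and $C\setminus G_0$. Indeed, if $G_0$ were open in $G'$ then $\chi_0$, being constant on each of the two open cosets, would be $\omega(G,\Gamma)$-continuous; but the continuous characters of $G'\cong H$ are exactly the elements of $\Gamma$, and $\chi_0\notin\Gamma$ by choice. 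Equivalently, since $G$ is Boolean a basic $G'$-neighbourhood of $0$ has the form $\bigcap_{i\le n}\ker\gamma_i$ with $\gamma_i\in\Gamma$, and this can be contained in $\ker\chi_0$ only if $\chi_0\in\langle\gamma_1,\dots,\gamma_n\rangle\subset\Gamma$. Thus $G_0$ is a proper \emph{dense} subgroup of $G'$, and nothing prevents an infinite compact $C\subset G'$ from meeting both cosets in non-closed pieces; the transfer of ``no infinite compact subsets'' from $G_0$ to $G'$ breaks down precisely here.

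The paper's argument avoids this by staying on the $\varrho G$ side rather than passing to duals. It first shows that $\pi(C)$ is closed in $H$ for every countable $C\supset C_0$, which lets one cut a putative infinite compact $F\subset H$ down to a countably infinite compact set, pull it back through the perfect map $\pi$ to a countable compact $Q\subset\varrho G$, write $Q=R\cup(R+x_0)$ with $R=Q\cap G$, and then invoke the sequential completeness of $G$ from Proposition~\ref{Pro:1} to reach a contradiction. Your dual picture can in fact be completed in the same spirit---a nontrivial $G'$-convergent sequence sitting in the nontrivial coset of $G_0$, once translated into $G_0$, converges in $\varrho G$ to a point of the form $a+x_0\in\varrho G\setminus G$, again contradicting sequential completeness---but some appeal to $\varrho G$ and Proposition~\ref{Pro:1} seems unavoidable, and the clopen-splitting shortcut does not work as written.
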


\begin{proof}
Since the group $C_0$ is infinite, its closure $\overline{C}_0$ is
an infinite compact group and, hence, $|\overline{C}_0|\geq
2^\omega$. It follows from Proposition~\ref{Pro:1} that $C_0$ is
closed in $G$, so $\overline{C}_0\cap G=C_0$. Hence
$\overline{C}_0\setminus C_0$ is an uncountable subset of
$\varrho{G}\setminus G$. This explains, in particular, why we can
find an element $x_0\in\overline{C}_0\setminus C_0$.

We claim that for every countable subgroup $C$ of $G$ with $C_0
\subset C$, the image $\pi(C)$ is closed in $H$. Since the
homomorphism $\pi$ is open, it suffices to verify that
$\pi^{-1}\pi(C)=C+K$ is closed in $\pi^{-1}\pi(G)=G+K$. If not,
there exists $y\in G+K$ such that $y\in\overline{C+K}\setminus
(C+K)$. There are two possibilities:

Case~I. $y\in G$. Since $C$ is closed in $G$, we deduce that
$y\notin\overline{C}$. It follows from $\overline{C+K}=\overline{C}
\cup\overline{C+x_0}$ that $y\in\overline{C+x_0}=\overline{C}+x_0$.
In its turn, this implies that $x_0\in\overline{C}+y$. Since
$C_0\subset C$, we have that $x_0\in \overline{C}\cap
(\overline{C}+y)$ and, hence, $y\in\overline{C}$. This is a
contradiction.

Case~II. $y\in G+x_0$. Then the element $z=y+x_0\in G+K$ satisfies
$z\in\overline{C+K}\setminus(C+K)$. We have just shown in Case~I,
however, that this is impossible. This proves that $\pi(C)$ is
closed in $H$.

Let us see that all compact subsets of the subgroup $H$ are finite.
Suppose to the contrary that $F$ is an infinite compact subset of
$H$. There exists a countable subgroup $C$ of $G$ such that the set
$\pi(C)\cap F$ is infinite and $C_0\subset C$. We have just proved that
$\pi(C)$ is a closed subgroup of $H$. Therefore, $P=\pi(C)\cap F$ is
a countable, infinite, compact subset of $H$. Since the projection
$\pi\colon\varrho{G}\to\varrho{G}/K$ is perfect, $Q=\pi^{-1}(P)$ is
a countable infinite compact subset of $\varrho{G}$.

Let $R=Q\cap G$. Clearly, $R\subset C$ and $P=\pi(R)$. It follows from
the definition of $Q$ and $R$ that
\[Q=R+K = R\cup (R+x_0).
\]
Since $Q$ is countable, infinite, and compact, it contains a
nontrivial convergent sequence. By Proposition~\ref{Pro:1}, the
group $G$ and its subspace $R$ do not contain infinite compact
subsets. The same is also valid for the subspace $R+x_0$ of
$\varrho{G}$. Therefore, since the group $\varrho{G}$ is Boolean and
$Q=R\cup (R+x_0)$, there exists a sequence $D=\{x_n: n\in\omega\}
\subset R$ converging to an element $y\in Q\setminus R=Q\setminus
(Q\cap G)$. This contradicts the sequential completeness of $G$ (see
Proposition~\ref{Pro:1}) and implies that all compact subsets of $H$
are finite.

Finally, the group $H$ is pseudocompact as a continuous image of the
pseudocompact group $G$. Hence the reflexivity of $H$ follows from
Theorem~\ref{fincompreflex}.
\end{proof}

In the following theorem we extend the frontiers of the class of
reflexive precompact groups.

\begin{theorem}\label{Th:4}
There exist precompact reflexive groups which are not pseudocompact.
\end{theorem}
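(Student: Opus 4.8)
The plan is to obtain the required group not directly, but as the Pontryagin dual of the reflexive group supplied by Lemma~\ref{Le:Q}, reading off its precompactness and its failure of pseudocompactness from Proposition~\ref{PR}.

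Concretely, I would start from an infinite pseudocompact Boolean group $G$ all of whose countable subgroups are $h$-embedded (Theorem~\ref{examplePhZ2}), realized as a dense subgroup of its completion $\varrho G$, which is then a compact Boolean group. Fix a countable infinite subgroup $C_0$ of $G$, a point $x_0\in\overline{C_0}\setminus C_0$ (closure taken in $\varrho G$; such $x_0$ exists since $\overline{C_0}$ is an infinite compact group), put $K=\langle x_0\rangle$ (a group of order $2$) and let $\pi\colon\varrho G\to\varrho G/K$ be the quotient homomorphism. By Lemma~\ref{Le:Q}, the group $H=\pi(G)$ has only finite compact subsets and is reflexive. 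Since the dual of a reflexive group is reflexive ($\alpha_{H^\wedge}$ being the inverse of the topological isomorphism $\alpha_H^\wedge$), the group $L:=H^\wedge$ is reflexive as well, and the fact that $\alpha_H$ is a topological isomorphism gives $L^\wedge\cong H$.

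Now I would verify the remaining two properties of $L$ by applying Proposition~\ref{PR} to the reflexive group $L$. Part~(a) of that proposition, together with the fact that all compact subsets of $L^\wedge\cong H$ are finite (Lemma~\ref{Le:Q}), shows at once that $L$ is precompact. For the failure of pseudocompactness, part~(b) reduces matters to exhibiting a countable subgroup of $L^\wedge\cong H$ which is not $h$-embedded, and I claim that $\pi(C_0)$ works. Since $C_0\cap K=\{0\}$, the map $\pi\upharpoonright_{C_0}$ is injective, so $\pi(C_0)$ is a countable subgroup of $H$. Because $\pi(G)$ is dense in the compact group $\varrho G/K$, each continuous character of $H$ is induced, via $\pi$, by a (unique) continuous character $\chi$ of $\varrho G$ with $\chi(x_0)=1$, and conversely. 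By Pontryagin duality for the compact group $\overline{C_0}$ (recall $x_0\neq 0$), choose a continuous character $\psi$ of $\overline{C_0}$ with $\psi(x_0)\neq 1$. If some continuous character $\widetilde f$ of $H$ satisfied $\widetilde f(\pi(c))=\psi(c)$ for every $c\in C_0$, then its inducing character $\chi$ on $\varrho G$ would agree with $\psi$ on $C_0$, hence, by density of $C_0$ in $\overline{C_0}$ and continuity, on all of $\overline{C_0}$; so $\chi(x_0)=\psi(x_0)\neq 1$, contradicting $\chi(x_0)=1$. Thus the homomorphism $\pi(c)\mapsto\psi(c)$ of $\pi(C_0)$ into $\mathbb T$ admits no continuous extension to $H$, so $\pi(C_0)$ is not $h$-embedded in $H$, and therefore $L$ is not pseudocompact. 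This produces the precompact, reflexive, non-pseudocompact group required by the theorem.

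The main obstacle is this last point: one must see that dividing by $K=\langle x_0\rangle$ with $x_0$ chosen inside $\overline{C_0}$ necessarily destroys the $h$-embeddedness of the copy $\pi(C_0)$ of $C_0$. The mechanism is exactly that, \emph{because} $x_0\in\overline{C_0}$, the value at $x_0$ of any continuous extension of a character of $C_0$ is already forced, so the additional constraint ``annihilate $K$'', which the passage to $H^\wedge$ imposes on the continuous characters of $H$, cannot be satisfied for an appropriate character of $C_0$. The rest is bookkeeping: one should still confirm the routine points that $\pi(C_0)$ is genuinely a subgroup of $H$, that $\varrho H=\varrho G/K$ so that the description of the continuous characters of $H$ via characters of $\varrho G$ killing $K$ is correct, and that $L^\wedge$ is $H$ itself rather than merely a group containing $H$ as a dense subgroup.
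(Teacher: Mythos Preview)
Your proposal is correct and follows essentially the same route as the paper: build $H=\pi(G)$ as in Lemma~\ref{Le:Q}, pass to its dual $L=H^\wedge$, and show that $L$ is precompact, reflexive, and not pseudocompact by exhibiting a countable subgroup of $L^\wedge\cong H$ (namely $\pi(C_0)$) that fails to be $h$-embedded. Two minor differences are worth noting: you verify precompactness of $L$ explicitly via Proposition~\ref{PR}\,(a), which the paper leaves implicit (it follows at once since all compact subsets of $H$ are finite, so $\tau_{co}=\tau_p$ on $H^\wedge$); and for the failure of $h$-embeddedness of $\pi(C_0)$ you give a direct character argument, whereas the paper argues that $\pi\!\upharpoonright_{C_0}$ is not open onto its image (invoking \cite[1.3]{grant}), hence the induced topology on $\pi(C_0)$ is strictly coarser than the Bohr topology. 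Both arguments encode the same obstruction coming from $x_0\in\overline{C_0}$.
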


\begin{proof}
Let $G$ be any infinite pseudocompact Boolean group all countable
subgroups of which are $h$-embedded (Theorem~\ref{examplePhZ2}).
Take a countable infinite subgroup $C$ of $G$ and pick a point
$x_0\in\overline{C}\setminus C$, where the closure is taken in the
completion $\varrho{G}$ of $G$. Then the group $K=\langle x_0
\rangle$ has a trivial intersection with $G$. Let $\pi\colon
\varrho{G}\to\varrho{G}/K$ be the canonical projection. By
Lemma~\ref{Le:Q}, the subgroup $H=\pi(G)$ of the compact group
$\varrho{G}/K$ is reflexive. Hence its dual $D=H^\wedge$ is
reflexive as well.

We claim that the group $D$ is not pseudocompact. Indeed, otherwise
item a) of Lemma~\ref{Le:N} would imply that all countable subgroups
of the dual group $D^\wedge\cong H$ were $h$-embedded in $H$.
However, our choice of $x_0\in\overline{C}\setminus C$ together with
the fact that the intersection $K\cap G$ is trivial imply that the
topology of $\pi(C)$ is strictly coarser than the topology of $C$
or, in other words, the restriction of $\pi$ to $C$ is not open when
considered as a mapping of $C$ onto $\pi(C)$. Indeed, by
\cite[1.3]{grant}, this restriction is open if and only if the
intersection $K\cap C$ is dense in $K$, which is not our case. Thus
the maximal precompact topology on the abstract group $\pi(C)$ is
strictly finer than the topology of $\pi(C)$ inherited from
$H=\pi(G)$ and, hence, $\pi(C)$ is not $h$-embedded in $H$. This
finishes the proof.
\end{proof}

According to Theorem \ref{Phreflexive} and
Theorem~\ref{examplePhZ2}, there are reflexive groups which are
pseudocompact noncompact. By Theorem~\ref{Th:4}, there are reflexive
groups which are precompact non-pseudocompact. It is natural to ask,
therefore, if there exist countably compact noncompact reflexive
groups. We show in Proposition~\ref{Exa:HJ} below that,
consistently, the answer is \lq\lq{yes\rq\rq}.

\begin{proposition}\label{Exa:HJ}
Under Martin's Axiom, every Boolean group $G$ of cardinality
${\mathfrak c}$ admits a reflexive countably compact noncompact
topological group topology.
\end{proposition}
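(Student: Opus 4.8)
The plan is to reduce the proposition to a purely constructive statement and then let the earlier machinery finish the job. First I would observe that we may take $G=\mathbb{Z}(2)^{(\mathfrak c)}$, since every infinite Boolean group of cardinality $\mathfrak c$ is a $\mathbb{Z}(2)$-vector space of dimension $\mathfrak c$. It then suffices to equip $G$ with a Hausdorff, countably compact group topology $\mathcal{T}$ all of whose countable subgroups are $h$-embedded. Indeed, countable compactness forces pseudocompactness, so $(G,\mathcal{T})$ is reflexive by Theorem~\ref{Phreflexive}; and by Proposition~\ref{Pro:1} every compact subset of $(G,\mathcal{T})$ is finite, so $(G,\mathcal{T})$ cannot be compact, for otherwise $G$, being a compact subset of itself, would be finite, contradicting $|G|=\mathfrak c$. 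Thus the entire content is the existence, under Martin's Axiom, of such a topology $\mathcal{T}$.

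To build $\mathcal{T}$ I would strengthen the construction behind Theorem~\ref{examplePhZ2}: its ZFC version realizes $G$ as a dense subgroup of $K:=\mathbb{Z}(2)^{\mathfrak c}$ so that each countable subgroup $C\le G$ inherits from $K$ exactly its Bohr topology --- equivalently, the coordinate characters of $K$, restricted to $C$, exhaust $\mathrm{Hom}(C,\mathbb{T})$ --- which is precisely what makes every countable subgroup $h$-embedded. Under Martin's Axiom one has $\mathfrak{p}=\mathfrak c$, so I would fix a selective ultrafilter $p$ on $\omega$ and build $G=\bigcup_{\alpha<\mathfrak c}G_{\alpha}$ as an increasing chain of subgroups of $K$ of size $<\mathfrak c$, driven by a bookkeeping that along the way lists all sequences lying in the part of $G$ already committed and all pairs $(C,h)$ with $C$ a countable subgroup of $G$ and $h\colon C\to\mathbb{T}$. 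At a step handling a sequence $(x_n)$ I would adjoin to $G$ its (unique) $p$-limit in $K$; since $p$ is free this point is a cluster point of $\{x_n:n\in\omega\}$, so closing under all such limits makes $(G,\mathcal{T})$ countably compact. At a step handling a pair $(C,h)$ I would throw in a few fresh free generators, placed in coordinates of $K$ reserved in advance, so that some coordinate character of $K$ restricts to $h$ on $C$; the countable subgroups enlarged by the $p$-limit steps are put back on the agenda and re-treated, and the enumeration is kept so that $G$ stays dense in $K$ (hence $\mathcal{T}$ is Hausdorff and $G^{\wedge}=K^{\wedge}|_{G}$). Martin's Axiom is used both to supply the selective ultrafilter and, at the successor steps where a single new point of $K$ must meet the fewer than $\mathfrak c$ constraints accumulated so far, to produce that point through a filter of a $\sigma$-centered poset meeting $<\mathfrak c$ dense sets.

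The hard part will be the compatibility of the two demands. Closing under $p$-limits drags into $G$ new points that are only partially under control, and one must show these can always be accommodated without destroying (i) Hausdorffness and density in $K$, (ii) the requirement that \emph{every} countable subgroup already treated still carries its Bohr topology, and (iii) the absence of infinite compact subsets (which, granting (ii), follows from Proposition~\ref{Pro:1}, but which must not be sabotaged prematurely). Verifying that a selective ultrafilter together with Martin's Axiom leaves enough room to preserve all of these invariants through all $\mathfrak c$ stages --- in particular that the $p$-limits forced into $G$ never create a nontrivial convergent sequence inside any countable subgroup --- is where essentially all the work lies. Once the recursion is complete, the reduction in the first paragraph yields the reflexive, countably compact, noncompact group topology on $G$, completing the proof.
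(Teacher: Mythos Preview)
Your reduction in the first paragraph is fine, but the construction you propose cannot exist: an infinite countably compact precompact Boolean group can \emph{never} have all of its countable subgroups $h$-embedded. Pick an independent sequence $(x_n)_{n\in\omega}$ in $G$; countable compactness gives a cluster point $x\in G$. Set $C'=\langle x,x_n:n\in\omega\rangle$. Whether or not $x$ lies in the span of the $x_n$, there is a finite set $F\subset\omega$ and a homomorphism $\chi\colon C'\to\mathbb{Z}(2)$ with $\chi(x)=0$ and $\chi(x_n)=1$ for every $n\notin F$ (take $F=\emptyset$ if $x$ is independent of the $x_n$; otherwise write $x=\sum_{i\in F}x_i$ and set $\chi(x_i)=0$ for $i\in F$). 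If $C'$ were $h$-embedded, $\chi$ would extend to a continuous character $\tilde\chi$ on $G$, and then $\tilde\chi^{-1}(\{0\})$ would be a neighbourhood of $x$ meeting $\{x_n:n\in\omega\}$ in a finite set, contradicting that $x$ is a cluster point. So the ``compatibility of the two demands'' that you flag as the hard part is in fact an outright incompatibility: no amount of Martin's Axiom or careful bookkeeping with $p$-limits will rescue the scheme, because each $p$-limit you adjoin immediately creates a countable subgroup that fails to carry its Bohr topology.

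The paper avoids this trap by not going through Theorem~\ref{Phreflexive} at all. It quotes a construction from \cite{DT2} which, under MA, embeds $G$ into $\mathbb{Z}(2)^{\mathfrak c}$ so that the image is countably compact and every infinite subset is \emph{finally dense} (its projection onto some tail $\mathbb{Z}(2)^{\mathfrak c\setminus\alpha}$ is dense). From final density one gets directly, by a one-line cardinality argument, that all compact subsets of $G$ are finite (an infinite compact set would project \emph{onto} some $\mathbb{Z}(2)^{\mathfrak c\setminus\alpha}$, which has cardinality $2^{\mathfrak c}$). Reflexivity then comes from Theorem~\ref{fincompreflex} (pseudocompact with no infinite compact subsets), not from Theorem~\ref{Phreflexive}. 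In short, the paper uses the weaker hypothesis ``no infinite compact subsets'' --- which \emph{is} compatible with countable compactness --- rather than the stronger ``all countable subgroups $h$-embedded'', which is not.
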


\begin{proof}
Let $G$ be a Boolean group with $|G|={\mathfrak c}$. It was
established in the proof of \cite[Theorem~3.9]{DT2} that, under the
assumption of Martin's Axiom, there exists an (abstract)
monomorphism $h\colon G\to{\mathbb Z}(2)^{\mathfrak c}$ satisfying
the following conditions:
\begin{enumerate}
\item[(a)] every infinite subset $S$ of $h(G)$ is \textit{finally
dense\/} in ${\mathbb Z}(2)^{\mathfrak c}$, i.~e., there exists
$\alpha<{\mathfrak c}$ such that $\pi_{{\mathfrak
c}\setminus\alpha}(S)$ is dense in ${\mathbb Z}(2)^{{\mathfrak
c}\setminus\alpha}$, where $\pi_A\colon{\mathbb Z}(2)^{\mathfrak
c}\to{\mathbb Z}(2)^A$ is the projection for any set
$A\subset{\mathfrak c}$;
\item[(b)] $\pi_\alpha(h(G))={\mathbb Z}(2)^\alpha$, for every
$\alpha<{\mathfrak c}$.
\end{enumerate}
Let
\[
{\mathcal T}=\{h^{-1}(U\cap h(G)): U\ \mbox{is\ open\ in}\
{\mathbb Z}(2)^{\mathfrak c}\}.
\]
Then ${\mathcal T}$ is a Hausdorff topological group topology on $G$
and $h$ is a topological isomorphism of $H=(G,{\mathcal T})$ onto
the subgroup $h(G)$ of ${\mathbb Z}(2)^{\mathfrak c}$. It was proved
in \cite[Theorem~3.9]{DT2}, using (a) and (b), that the group $H$ is
countably compact.

We claim that all compact subsets of $H$ are finite. Suppose to the
contrary that $F$ is an infinite compact subset of $H$. According to
(a), there exists $\alpha<{\mathfrak c}$ such that $\pi_{{\mathfrak
c}\setminus\alpha}(F)$ is dense in ${\mathbb Z}(2)^{{\mathfrak
c}\setminus\alpha}$. Since $F$ is compact, we have that
$\pi_{{\mathfrak c}\setminus\alpha}(F)={\mathbb Z}(2)^{{\mathfrak
c}\setminus\alpha}$. The latter is impossible since
$|F|\leq{\mathfrak c}$ while $|{\mathbb Z}(2)^{{\mathfrak
c}\setminus\alpha}|= |{\mathbb Z}(2)^{\mathfrak c}|=2^{\mathfrak
c}>{\mathfrak c}$.

Thus, $H$ is a countably compact, noncompact Abelian group without
infinite compact subsets. The reflexivity of $H$ follows from
Theorem~\ref{fincompreflex}.
\end{proof}

The countably compact group $H=(G,{\mathcal T})$ in
Proposition~\ref{Exa:HJ} does not contain infinite compact subsets.
It is an open problem still whether there exist in $ZFC$ infinite
countably compact groups without non-trivial convergent sequences.
This makes it interesting to find out whether
countably compact noncompact reflexive groups can exist in $ZFC$
alone. Very recently this question was solved in the positive in
\cite{GRT}.

\section{Some properties of the class $\mathcal{P}_h$}\label{Sec:ClP}
Recall that $\mathcal{P}_h$ is the class of pseudocompact Abelian
groups whose countable subgroups are $h$-embedded. We proved in
Theorem~\ref{Phreflexive} that the groups in this class are
reflexive, and we will now establish some other properties of this
class concerning the duality theory.

\begin{theorem}\label{Th:PPro}
If a topological group $G$ is in $\mathcal{P}_h$, so is $G^\wedge$.
\end{theorem}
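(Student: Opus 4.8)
We must show: if $G \in \mathcal{P}_h$, then $G^\wedge \in \mathcal{P}_h$; that is, $G^\wedge$ is pseudocompact and all its countable subgroups are $h$-embedded. The plan is to exploit the duality machinery already assembled, especially Lemma~\ref{Le:N}, Theorem~\ref{Phreflexive}, and Proposition~\ref{PR}.

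First I would dispose of the easy half. Since $G$ is pseudocompact, item a) of Lemma~\ref{Le:N} gives immediately that every countable subgroup of $G^\wedge$ is $h$-embedded in $G^\wedge$. So the only real content is that $G^\wedge$ is pseudocompact. Here is where I would bring in reflexivity: by Theorem~\ref{Phreflexive}, $G \in \mathcal{P}_h$ is reflexive, so $\alpha_G\colon G \to G^{\wedge\wedge}$ is a topological isomorphism, and in particular $G^{\wedge\wedge} \cong G \in \mathcal{P}_h$ as well. Now I would like to apply item b) of Proposition~\ref{PR} to the group $G^\wedge$: to conclude that $G^\wedge$ is pseudocompact, it suffices to know that $G^\wedge$ is reflexive and that all countable subgroups of $(G^\wedge)^\wedge = G^{\wedge\wedge}$ are $h$-embedded. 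The latter holds because $G^{\wedge\wedge} \cong G \in \mathcal{P}_h$. So the crux reduces to: $G^\wedge$ is reflexive.

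To see that $G^\wedge$ is reflexive I would argue directly. Since $G$ is pseudocompact, Lemma~\ref{Le:N}(b) says every compact subset of $G^\wedge$ is finite, so the compact-open and pointwise topologies agree on $(G^\wedge)^\wedge = G^{\wedge\wedge}$, and moreover $G^\wedge$ carries the topology $\tau_p$ of pointwise convergence on $G^{\wedge\wedge} \cong G$ — indeed a precompact-type description is available because $G^\wedge$ is itself precompact (its completion is compact, being the dual of the discrete group underlying $G^{\wedge\wedge}$ up to the usual identifications; alternatively, since all countable subgroups of $G^\wedge$ are $h$-embedded, Proposition~\ref{Pro:1} applies). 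Then Lemma~\ref{racztrigos} applied to the precompact group $G^\wedge$ gives $((G^{\wedge\wedge}, \tau_p)^\wedge, \tau_p) \cong G^\wedge$, and combining this with the coincidence $\tau_{co} = \tau_p$ on both $G^{\wedge\wedge}$ and $(G^{\wedge\wedge})^\wedge$ yields $(G^\wedge)^{\wedge\wedge} \cong G^\wedge$, i.e. $G^\wedge$ is reflexive. This is essentially the same bookkeeping as in the proof of Theorem~\ref{fincompreflex}, now carried out for $G^\wedge$ in place of $G$.

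The main obstacle I anticipate is making sure the precompactness of $G^\wedge$ is correctly justified so that Lemma~\ref{racztrigos} genuinely applies — one must verify that $G^\wedge$ is a subgroup of a compact group, not merely that its compact subsets are finite. The cleanest route is probably to note that $G$ pseudocompact implies $G$ precompact, hence $G^{\wedge\wedge} \cong G$ is precompact, and then to observe that the dual of any topological group whose topology is $\omega(\cdot, \cdot)$ for a separating group of characters sits inside a power of $\mathbb{T}$; equivalently, invoke Proposition~\ref{Pro:1} via the $h$-embeddedness of countable subgroups of $G^\wedge$ to get sequential completeness and finiteness of compact sets, which is all that the final identification actually needs. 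Once precompactness (or its needed consequences) is pinned down, the rest is a routine chaining of the already-proved isomorphisms, and the conclusion $G^\wedge \in \mathcal{P}_h$ follows.
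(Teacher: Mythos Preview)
Your plan is correct and follows the paper's route almost exactly: Lemma~\ref{Le:N} for the $h$-embeddedness of countable subgroups of $G^\wedge$, Theorem~\ref{Phreflexive} for reflexivity of $G$, and then Proposition~\ref{PR}(b) applied to $G^\wedge$ to obtain its pseudocompactness. The only divergence is that you spend a paragraph re-proving that $G^\wedge$ is reflexive via Lemma~\ref{racztrigos} and the finiteness of compact sets, whereas the paper simply uses (implicitly) the standard fact that the dual of a reflexive group is reflexive: from the identity $(\alpha_G)^\wedge\circ\alpha_{G^\wedge}=\mathrm{id}_{G^\wedge}$ one sees that if $\alpha_G$ is a topological isomorphism then so is $\alpha_{G^\wedge}$. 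Invoking this collapses your argument to the paper's three lines. Your worry about precompactness of $G^\wedge$ is also easily dispatched without the detour: since $G\in\mathcal{P}_h$, Proposition~\ref{Pro:1} gives that all compact subsets of $G$ are finite, hence $\tau_{co}=\tau_p$ on $G^\wedge$, and $(G^\wedge,\tau_p)$ sits inside $\mathbb{T}^G$ and is therefore precompact.
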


\begin{proof}
Let $G\in\mathcal{P}_h$. Then $G$ is reflexive, by
Theorem~\ref{Phreflexive}. Hence, according to item b) of
Proposition~\ref{PR}, the topology
$\omega(G^\wedge,G^{\wedge\wedge}) = \omega(G^\wedge,G)$ is
pseudocompact, i.~e., the group $G^\wedge$ is pseudocompact. The
countable subgroups of $G^\wedge$ are $h$-embedded in $G^\wedge$, by
Lemma~\ref{Le:N}.
\end{proof}

The following concept was introduced in \cite{HM}. A topological
group $G$ is called \textit{countably pseudocompact\/} if for every
countable set $A\subset G$, there exists a countable set $B\subset
G$ such that $A\subset \overline{B}$ and $\overline{B}$ is
pseudocompact. It is easy to see that every countably pseudocompact
group is pseudocompact, but not vice versa. Hern\'andez and Macario
proved in \cite[Corollary~4.3]{HM} that for every countably
pseudocompact Abelian group $G$, the dual group $G^\wedge$ endowed
with the topology of pointwise convergence on elements of $G$ is a
$\mu$-space, i.~e., the closure of every functionally bounded subset
of $(G^\wedge,\tau_p)$ is compact. In the same article they also
raised the problem as to whether the conclusion remained valid for
pseudocompact groups. We solve the problem in the negative:

\begin{corollary}\label{Cor:HM}
There exists a pseudocompact Abelian group $G$ such that the dual
group $G^\wedge$ is pseudocompact and noncompact. Hence $G^\wedge$
fails to be a $\mu$-space.
\end{corollary}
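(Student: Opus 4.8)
The plan is to let $G$ be the group furnished by Theorem~\ref{examplePhZ2}: a dense pseudocompact subgroup of ${\mathbb Z}(2)^{\mathfrak c}$ all of whose countable subgroups are $h$-embedded. Then $G\in\mathcal{P}_h$, and, as a proper dense subgroup of an infinite compact group, $G$ is infinite, pseudocompact and noncompact. By Theorem~\ref{Th:PPro}, $G^\wedge\in\mathcal{P}_h$ as well; in particular $G^\wedge$ is pseudocompact. So both halves of the required ``pseudocompactness'' come directly from the machinery already developed, and the only things left to verify are that $G^\wedge$ is noncompact and that this already precludes the $\mu$-space property.

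To see that $G^\wedge$ is noncompact I would argue through reflexivity. By Theorem~\ref{Phreflexive} the group $G$ is reflexive, so $\alpha_G\colon G\to G^{\wedge\wedge}$ is a topological isomorphism. An infinite pseudocompact group is never discrete (an infinite discrete space carries unbounded continuous real-valued functions), hence $G^{\wedge\wedge}\cong G$ is not discrete. Since the dual of a compact Abelian group is discrete, it follows that $G^\wedge$ cannot be compact.

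For the final clause I would invoke a purely formal observation: a pseudocompact space is, by the very definition of pseudocompactness, functionally bounded as a subset of itself, and it is of course closed in itself, so in a $\mu$-space the whole space would have to be compact. As $G^\wedge$ is pseudocompact but not compact, it is not a $\mu$-space. One may additionally remark that, $G$ being pseudocompact, Lemma~\ref{Le:N}(b) gives $\tau_{co}=\tau_p$ on $G^\wedge$, so there is no ambiguity as to which topology on $G^\wedge$ is meant.

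I do not anticipate a real obstacle: every ingredient is already available (Theorems~\ref{examplePhZ2}, \ref{Phreflexive}, \ref{Th:PPro} and Lemma~\ref{Le:N}), and the single new step is the two-line reflexivity argument. The only point needing a little care is to resist proving ``$G^\wedge$ is noncompact'' by a direct cardinality or density estimate; the clean and correct route is via the non-discreteness of $G^{\wedge\wedge}\cong G$.
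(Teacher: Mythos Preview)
Your proposal is correct and follows essentially the same route as the paper: take an infinite $G\in\mathcal{P}_h$ from Theorem~\ref{examplePhZ2}, apply Theorem~\ref{Th:PPro} to get $G^\wedge$ pseudocompact, and rule out compactness of $G^\wedge$ via reflexivity (Theorem~\ref{Phreflexive}) together with the fact that the dual of a compact group is discrete while $G$ is not. Your added justifications for the $\mu$-space failure and for $\tau_{co}=\tau_p$ on $G^\wedge$ are welcome elaborations of points the paper leaves implicit.
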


\begin{proof}
Take any infinite group $G\in\mathcal{P}_h$ (see
Theorem~\ref{examplePhZ2}). By Theorem~\ref{Th:PPro}, the dual group
$G^\wedge$ is in $\mathcal{P}_h$ as well, so $G^\wedge$ is
pseudocompact. Clearly, $G^\wedge$ is not compact --- otherwise the
bidual group $G^{\wedge\wedge}$ would be discrete. The latter is
impossible since, by Theorem~\ref{Phreflexive}, the groups
$G^{\wedge\wedge}$ and $G$ are topologically isomorphic.
\end{proof}

Finally, we establish that the class $\mathcal{P}_h$ is wide and
that the operation of taking quotient groups completely destroys
reflexivity, even if the kernels of the respective quotient
homomorphisms are pseudocompact.

\begin{theorem}\label{Th:QG}
If $\,G$ is a pseudocompact Abelian group, there is a reflexive
group $H\in\mathcal{P}_h$ such that $G\cong H/L$, with $L$ a closed
 reflexive pseudocompact subgroup of $H$.
\end{theorem}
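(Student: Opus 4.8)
The plan is to realize $G$ as the image, under a coordinate projection, of a suitably thin dense subgroup of a compact product, with the kernel of that projection playing the role of $L$. Assuming $G$ infinite (the finite case is trivial), write $K=\varrho G$, which is a compact group, fix a cardinal $\kappa$ large enough and with $\kappa^{\omega}=\kappa$ (for instance $\kappa=2^{|G|}$), put $R=\mathbb{Z}(2)^{\kappa}$, and let $p\colon K\times R\to K$ be the first projection. The core of the argument is to construct, by a transfinite recursion of length $\kappa$ modelled on the one behind Theorem~\ref{examplePhZ2} (see \cite{Tk1}), a subgroup $H\leq K\times R$ with the following four properties: (i) $p(H)=G$; (ii) $H$ is $G_{\delta}$-dense in $K\times R$; (iii) $L:=H\cap(\{0\}\times R)$ is $G_{\delta}$-dense in $\{0\}\times R$; (iv) every countable subgroup of $H$ is $h$-embedded in $H$.

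Suppose such an $H$ is at hand, and put $\pi=p\upharpoonright_H\colon H\to G$. By (ii) and the compactness of $K\times R$ the group $H$ is pseudocompact, so $H\in\mathcal{P}_h$ by (iv), and therefore $H$ is reflexive by Theorem~\ref{Phreflexive}. The homomorphism $\pi$ is continuous and, by (i), surjective, and its kernel is $L$, which is thus closed in $H$. It is also open: given a basic neighbourhood $W=H\cap(U\times V)$ of $0$ in $H$ and $g\in U\cap G$, the $R$-coordinates of the elements of $H$ projecting to $g$ form a coset of $p_R(L)$, which is dense in $R$ by (iii) and so meets $V$; hence $g\in\pi(W)$, and $U\cap G\subseteq\pi(W)$ shows that $\pi(W)$ is a neighbourhood of $0$ in $G$. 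Consequently $H/L\cong G$. Finally, $L$ is pseudocompact by (iii); since $L\leq H$, every character of a countable subgroup of $L$ extends by (iv) to a continuous character of $H$ and hence of $L$, so $L\in\mathcal{P}_h$ and $L$ is reflexive (by Theorem~\ref{Phreflexive}, or directly by Theorem~\ref{fincompreflex}, the compact subsets of $L$ being finite). This is exactly the assertion of the theorem.

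To build $H$ one keeps an increasing chain $(H_{\alpha})_{\alpha<\kappa}$ of subgroups of $K\times R$ with $H=\bigcup_{\alpha<\kappa}H_{\alpha}$, and, using a bookkeeping function, treats at appropriate stages: each element of $G$ (one adjoins a generator with that first coordinate, which forces (i)); each closed $G_{\delta}$-subgroup $N$ of $K\times R$ — equivalently, each metrizable compact quotient $(K\times R)/N$ — together with its at most $\mathfrak{c}$ points, so that $H$ surjects onto $(K\times R)/N$ (this is precisely what (ii) amounts to, since such $N$ form a neighbourhood base at $0$), and likewise for the closed $G_{\delta}$-subgroups of $\{0\}\times R$, which yields (iii). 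At each step a witness can be chosen with first coordinate in $G$: for (ii) the relevant coset meets $G$ because the projection to $K$ of a closed $G_{\delta}$-subgroup is again a closed $G_{\delta}$-subgroup and $G$ is $G_{\delta}$-dense in $K$; for (iii) the first coordinate is simply $0$. Thus $p(H)=G$ is preserved throughout. Moreover, the $R$-coordinate of every generator introduced is chosen in addition to be in ``sufficiently general position'' relative to the countably many coordinates of $R$ already used, exactly as in \cite{Tk1}; this genericity is what delivers (iv).

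The step I expect to be the main obstacle is securing (iv) simultaneously with (i)--(iii). Property (iv) cannot be arranged by naive enumeration, since the final group has $2^{|H|}$ countable subgroups while the recursion has only $\kappa$ stages; rather, as in \cite{Tk1}, one must show that the general-position requirement imposed on every generator automatically forces all countable subgroups of $H$ to be $h$-embedded — any such subgroup lies in some $H_{\alpha}$, because $\kappa$ has uncountable cofinality, and the generic generators adjoined afterwards supply the needed continuous extensions of its characters. The genuinely new point to verify is that this genericity is compatible with the two extra demands here, $p(H)=G$ and the $G_{\delta}$-densities of $H$ and $L$: each such demand either fixes only the first coordinate of a new generator (an element of $G$) or imposes a single countable condition on its $R$-coordinate, leaving a large reservoir of fresh coordinates of $R$ available for the general-position choice. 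Checking this compatibility carefully is the heart of the proof.
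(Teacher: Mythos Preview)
Your overall plan is the right one, and it is in fact exactly what the paper does: the paper simply quotes \cite[Theorem~5.5]{DT}, whose proof carries out precisely a transfinite construction of a dense pseudocompact $H$ inside a compact product, with $p(H)=G$, with the kernel $G_\delta$-dense in its factor, and with all countable subgroups of $H$ $h$-embedded; the paper then only adds the observation (your last paragraph on $L$) that $L$ inherits the $\mathcal{P}_h$ property from $H$ and is therefore reflexive. Your derivation of the theorem from properties (i)--(iv), including the openness argument for $\pi$, is correct.

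There is, however, a genuine gap in your execution: the choice $R=\mathbb{Z}(2)^{\kappa}$ cannot deliver~(iv) unless $G$ happens to be Boolean. Since $H$ is dense in $K\times R$, the continuous characters of $H$ are exactly the restrictions of elements of $K^{\wedge}\times R^{\wedge}$, and every $\chi_2\in R^{\wedge}=\mathbb{Z}(2)^{(\kappa)}$ is $\{\pm 1\}$-valued. Hence for $c=(g,r)\in H$ one always has $\chi(c)\in\{\pm\chi_1(g):\chi_1\in K^{\wedge}\}$; no amount of ``general position'' on the $R$-side enlarges this set. Now take any pseudocompact $G$ containing an element $g$ of infinite order with $\langle g\rangle$ not $h$-embedded in $G$ (for instance $G=G_0\times\mathbb{T}$ with $G_0$ as in Theorem~\ref{examplePhZ2} and $g=(0,s)$ for $s\in\mathbb{T}$ of infinite order): then $\{\chi_1(g):\chi_1\in K^{\wedge}\}$ is a proper subgroup of $\mathbb{T}$, so for any $c=(g,r)\in H$ the evaluation map $(K\times R)^{\wedge}\to\mathrm{Hom}(\langle c\rangle,\mathbb{T})\cong\mathbb{T}$ has countable image, and $\langle c\rangle$ is \emph{not} $h$-embedded in $H$. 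In the construction of \cite{DT} this is avoided by using a richer second factor---essentially a product of compact metrizable groups (copies of $\mathbb{T}$, or quotients of $K$) whose coordinate projections can themselves realize arbitrary $\mathbb{T}$-valued homomorphisms of the countable subgroups already built; the ``general position'' step then prescribes those coordinates, not merely an element of $\mathbb{Z}(2)^{\kappa}$. Replacing $\mathbb{Z}(2)^{\kappa}$ by such a factor (and adapting the genericity accordingly) is the missing ingredient; once that is done, your outline goes through and coincides with the cited argument.
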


\begin{proof}
By \cite[Theorem~5.5]{DT}, for every pseudocompact Abelian group
$G$, one can find a pseudocompact sequentially complete Abelian
group $H$ and a closed pseudocompact subgroup $L$ of $H$ such that
the group $G$ is topologically isomorphic to the quotient group
$H/L$.

Sequential completeness of $H$ follows from a stronger property of
$H$, namely, its countable subgroups are $h$-embedded. This was
explicitly shown in the proof of \cite[Theorem~5.5]{DT}. Hence
$H\in\mathcal{P}_h$. Finally, the group $H$ and its closed
pseudocompact subgroup $L$ are reflexive according to
Theorem~\ref{Phreflexive}.
\end{proof}

 It is worth comparing the above result with
Theorem~2.6 from \cite{BCM} saying that taking quotients with
respect to compact subgroups preserves reflexivity.

{\bf Acknowledgements.}
This work was done while the last listed author was visiting Spain
in 2008. He thanks his hosts for generous support and hospitality.

\newpage
  \noindent S.~Ardanza-Trevijano and M.~J.~Chasco\\
   Departamento de F\'{\i}sica y Matem\'atica Aplicada\\
   Universidad de Navarra\\
   Irunlarrea s.~n.\\
   31080 Pamplona, Spain \\
   \verb+sardanza@unav.es+ \\ \verb+mjchasco@unav.es+
   
\vspace{.5cm}

 \noindent  X.~Dom\'{\i}nguez\\
   Departamento de M\'{e}todos Matem\'{a}ticos y de
Representaci\'{o}n\\
   Universidad de A Coru\~na\\
   Campus de Elvi\~na s.~n.\\
   15071 A Coru\~na, Spain\\
   \verb+xdominguez@udc.es+

\vspace{.5cm}

 \noindent 
   M.~Tkachenko\\
  Departamento de Matem\'aticas\\
   Universidad Aut\'onoma Metropolitana\\
   Av.~San Rafael Atlixco 186\\
   Col.~Vicentina, Iztapalapa, C.~P. 09340\\
   M\'exico, D.~F.\\
   \verb+mich@xanum.uam.mx+
\end{document}